\newcommand{\F}{\mathbb{F}}
\newcommand{\e}{\varepsilon}
\newcommand{\la}{\langle}
\newcommand{\ra}{\rangle}
\newcommand{\ct}{\mathbb{1}}
\newtheorem{thm}{Theorem}[section]
\newtheorem{lem}[thm]{Lemma}
\theoremstyle{definition}
\begin{document}
\title{A relative bound for independence} 
\subjclass[2010]{05C50, 05C69.}
\keywords{Independence number, Laplacian eigenvalue}
\author{Bogdan Nica}
\begin{abstract}
We prove an upper bound for the independence number of a graph in terms of the largest Laplacian eigenvalue, and of a certain induced subgraph. Our bound is a refinement of a well-known Hoffman-type bound.
\end{abstract}

\address{\newline Department of Mathematics and Statistics \newline McGill University, Montreal}
\date{\today}

\maketitle
\section{Introduction}
A number of powerful bounds for combinatorial graph invariants rely on spectral information, and much has been written about this rich stream in spectral graph theory. Herein, we focus on eigenvalue bounds for the independence number of a graph. 

Let $X$ be a non-empty graph on $n$ vertices. The independence number of $X$ is denoted, as usual, by $\alpha$. The best-known spectral estimate for the independence number is the \emph{Hoffman bound}: if $X$ is regular of degree $d$, then 
\begin{align}\label{hoff-ad}
\alpha\leq n\:\frac{-\theta_{\min}}{d-\theta_{\min}}
\end{align}
where $\theta_{\min}$ is the smallest adjacency eigenvalue. The Hoffman bound has been extended by Haemers \cite{Ham} to graphs which are not necessarily regular, as follows:
\begin{align*}
\alpha\leq n\:\frac{-\theta_{\min}\:\theta_{\max}}{\delta^{2}-\theta_{\min}\:\theta_{\max}}
\end{align*}
where $\theta_{\min}$ and $\theta_{\max}$ are the extremal adjacency eigenvalues, and $\delta$ denotes the minimal degree. If $X$ is $d$-regular, then $\delta=d$ and $\theta_{\max}=d$, so we recover the bound \eqref{hoff-ad}.

Our interest in this paper lies on the Laplacian side. The two spectral perspectives, adjacency and Laplacian, are essentially equivalent on regular graphs; on irregular graphs, they are genuinely different. The Laplacian formulation of the Hoffman bound \eqref{hoff-ad} reads as follows: if $X$ is regular of degree $d$, then 
\begin{align}\label{hoff-lap}
\alpha\leq n\Big(1-\frac{d}{\lambda_{\max}}\Big)
\end{align}
where $\lambda_{\max}$ is the largest Laplacian eigenvalue. Once again, the Hoffman bound \eqref{hoff-lap} can be generalized to graphs which are not necessarily regular. It turns out that one simply has to use the minimal degree $\delta$.

\begin{thm}\label{thm: Hlap}
The independence number of $X$ satisfies
\begin{align*}
\alpha\leq n\Big(1-\frac{\delta}{\lambda_{\max}}\Big).
\end{align*}
\end{thm}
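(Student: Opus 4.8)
The plan is to test the Laplacian quadratic form against a suitably centered indicator vector. Write $L = D - A$ for the Laplacian, where $D$ is the diagonal matrix of degrees and $A$ is the adjacency matrix, and recall that its eigenvalues satisfy $0 = \lambda_1 \leq \cdots \leq \lambda_n = \lambda_{\max}$, with the all-ones vector $\mathbf{1}$ spanning the kernel. Let $S$ be a maximum independent set, so $|S| = \alpha$, and let $\mathbf{1}_S$ be its indicator vector. Because $S$ carries no internal edges, $\mathbf{1}_S^{\top} A \mathbf{1}_S = 0$, and therefore $\mathbf{1}_S^{\top} L \mathbf{1}_S = \mathbf{1}_S^{\top} D \mathbf{1}_S = \sum_{v \in S} \deg(v) \geq \alpha\,\delta$. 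This is the single place where both independence and the minimal degree $\delta$ enter.

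Next I would center the indicator: set $x = \mathbf{1}_S - \tfrac{\alpha}{n}\,\mathbf{1}$, which is orthogonal to $\mathbf{1}$. Since $L\mathbf{1} = 0$, the cross terms vanish, so $x^{\top} L x = \mathbf{1}_S^{\top} L \mathbf{1}_S \geq \alpha\,\delta$, while a direct expansion gives the squared norm $\|x\|^2 = \alpha - \alpha^2/n = \alpha\,(1 - \alpha/n)$. I would then invoke the Rayleigh bound $x^{\top} L x \leq \lambda_{\max}\,\|x\|^2$, valid for every vector precisely because $\lambda_{\max}$ is the top eigenvalue. Combining the two estimates yields $\alpha\,\delta \leq \lambda_{\max}\,\alpha\,(1 - \alpha/n)$. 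Dividing through by $\alpha > 0$ and by $\lambda_{\max} > 0$ — the latter strictly positive because $X$ is non-empty and hence carries an edge — gives $\delta/\lambda_{\max} \leq 1 - \alpha/n$, which rearranges to the claimed inequality.

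I expect the only genuine design choice, rather than a true obstacle, to be the centering step. Testing against the raw indicator $\mathbf{1}_S$ produces merely $\delta \leq \lambda_{\max}$, whereas subtracting the projection of $\mathbf{1}_S$ onto $\mathbf{1}$ shrinks the denominator from $\alpha$ to $\alpha\,(1 - \alpha/n)$, and this is exactly what sharpens the inequality into the Hoffman-type form. Everything else is routine quadratic-form bookkeeping, and the argument is the natural Laplacian analogue of the classical eigenvalue-interlacing derivation of the regular Hoffman bound \eqref{hoff-lap}.
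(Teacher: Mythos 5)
Your proof is correct and is essentially the paper's own argument: your centered vector $x=\mathbf{1}_S-\tfrac{\alpha}{n}\,\mathbf{1}$ is exactly $\tfrac{1}{n}g$, where $g$ is the test function in the paper's Godsil--Newman subsection, and the same Rayleigh-quotient computation there yields \eqref{IH} and hence the theorem via $\deg(U)\geq\delta$. (One harmless imprecision: $\mathbf{1}$ spans the kernel of $L$ only when $X$ is connected, but your argument uses only $L\mathbf{1}=0$ and the validity of the Rayleigh bound for all vectors, both of which always hold.)
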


It appears that this result has been rediscovered several times (van Dam - Haemers \cite[Lem.3.1]{vDH}, Zhang \cite[Cor.3.3]{Zh}, Godsil - Newman \cite[Cor.3.6]{GN}, Lu - Liu - Tian \cite[Thm.3.2]{LLT}). We refer to the estimate of Theorem~\ref{thm: Hlap} as the \emph{Hoffman-type bound}. Clearly, it turns into the Hoffman bound \eqref{hoff-lap} in the case of regular graphs.

The weakness of the Hoffman-type bound comes from its undue dependence on the minimal degree. Consider, for example, the addition of a pendant vertex to a given graph; then the independence number, the size, and the largest Laplacian eigenvalue remain essentially the same, but the minimal degree can change drastically. This motivates us to look for a more stable refinement of the Hoffman-type bound.

We prove a refinement which depends on the independence number of a certain subgraph. Given a graph $X$, the \emph{derived graph} $X'$ is the subgraph obtained by deleting the vertices of $X$ which have maximal degree. Henceforth, the maximal degree is denoted by $\Delta$; let us recall that the minimal degree is denoted by $\delta$.

\begin{thm}\label{thm: rel}
The independence number of $X$ satisfies
\begin{align*}
\alpha\leq n\Big(1-\frac{\Delta}{\lambda_{\max}}\Big)+\alpha'\frac{\Delta-\delta}{\lambda_{\max}-\delta}
\end{align*}
where $\alpha'$ is the independence number of the derived graph $X'$.
\end{thm}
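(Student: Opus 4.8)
The plan is to sharpen the standard ratio-bound proof of Theorem~\ref{thm: Hlap} by letting the test vector weight vertices according to whether or not they attain the maximal degree. Write $\lambda=\lambda_{\max}$ and set $M=\lambda I-L$, where $L$ is the Laplacian of $X$. Since $L\mathbf{1}=0$, the matrix $M$ is positive semidefinite with $M\mathbf{1}=\lambda\mathbf{1}$; hence every $x\in\R^n$ satisfies $x^{\mathsf T}Mx\geq\frac{\lambda}{n}(x^{\mathsf T}\mathbf{1})^2$, obtained by projecting $x$ onto $\R\mathbf{1}$ and discarding the nonnegative orthogonal contribution. Fix a maximum independent set $S$, so $|S|=\alpha$, and partition it as $S=S_{\Delta}\sqcup S'$, where $S_{\Delta}$ collects the vertices of $S$ of maximal degree and $S'$ the rest. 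Put $a=|S_\Delta|$ and $b=|S'|$, so $a+b=\alpha$; crucially, $S'$ is independent in the derived graph $X'$, whence $b\leq\alpha'$.

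For a parameter $p\in\R$ I would apply the displayed inequality to the vector $x$ equal to $p$ on $S_\Delta$, equal to $1$ on $S'$, and zero off $S$. Because $S$ is independent, the principal submatrix of $M$ on $S$ is diagonal with entries $\lambda-d_v$, so $x^{\mathsf T}Mx=(\lambda-\Delta)ap^2+\sum_{v\in S'}(\lambda-d_v)$ while $x^{\mathsf T}\mathbf{1}=ap+b$. Using $\lambda-d_v\leq\lambda-\delta$ for $v\in S'$ then gives
\begin{align*}
(\lambda-\Delta)\,ap^2+(\lambda-\delta)\,b\;\geq\;\tfrac{\lambda}{n}\,(ap+b)^2
\end{align*}
for every real $p$. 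The left side minus the right side is a quadratic in $p$ that is nonnegative on all of $\R$, so its discriminant is nonpositive.

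Working out the discriminant condition and clearing the common factor $ab/n^2$ should collapse, once the quartic terms $\lambda^2ab$ cancel, to the clean linear inequality
\begin{align*}
a(\lambda-\delta)+b(\lambda-\Delta)\;\leq\;\frac{n}{\lambda}\,(\lambda-\Delta)(\lambda-\delta).
\end{align*}
Rewriting the left side as $\alpha(\lambda-\delta)-b(\Delta-\delta)$, dividing by $\lambda-\delta>0$, and finally invoking $b\leq\alpha'$ together with $\Delta\geq\delta$ should yield exactly the asserted bound. I would dispatch the degenerate cases $a=0$ and $b=0$ (where the quadratic in $p$ is constant, respectively has vanishing leading coefficient) by hand; in each the same conclusion drops out, and one checks en route that $\lambda>\Delta\geq\delta$ keeps every denominator positive.

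The conceptual heart, and the step most likely to cause trouble, is the choice of weighting: the unweighted case $p=1$ merely reproduces a quadratic-in-$\alpha$ estimate that is \emph{strictly weaker} than Theorem~\ref{thm: rel}, and it is the freedom to optimize over $p$—encoded painlessly as the nonpositivity of the discriminant—that extracts the correction term governed by $\alpha'$ and produces the denominator $\lambda-\delta$. The remaining risk is bookkeeping: one must confirm that replacing $\sum_{v\in S'}(\lambda-d_v)$ by $(\lambda-\delta)b$ enlarges the left side and so preserves the inequality's direction, and that the discriminant is simplified without sign errors.
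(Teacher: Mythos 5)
Your proof is correct and takes essentially the same route as the paper: both split the independent set by maximal versus non-maximal degree, apply the extremal Laplacian eigenvalue inequality to a two-level weighted indicator vector supported on that set, and extract the key linear inequality from the nonpositivity of a discriminant, finishing with $b\leq\alpha'$ (the paper's $n_1\leq\alpha'$). Your one-parameter vector together with $x^{\mathsf T}Mx\geq\frac{\lambda}{n}(x^{\mathsf T}\mathbf{1})^2$ is just a dehomogenized, matrix-language version of the paper's two-parameter function $f$ made orthogonal to constants, so the two arguments coincide step for step, including the separate treatment of the degenerate cases $a=0$ or $b=0$.
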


The above estimate will be referred to as the \emph{relative bound} for the independence number. The usability of the relative bound depends, of course, on being able to give an upper estimate for the independence number of the derived graph. The underlying principle is that the derived graph is often a simpler and much smaller graph than the original one. If needed, the derived graph could be derived once again, and so the relative bound can be interpreted as a recursive, hierarchical procedure.

An obvious upper estimate for $\alpha'$ is the number of vertices of non-maximal degree, that is, the size of the derived graph $X'$. In many examples of interest, the vertices of non-maximal degree are actually independent in $X$, so this is the most that the relative bound can give. Besides, using the number of vertices of non-maximal degree to estimate $\alpha'$ is often good enough, if that number is sufficiently small.

The other obvious upper estimate for $\alpha'$ is $\alpha$ itself, and then one easily checks that the relative bound amounts to the Hoffman-type bound. This is of theoretical interest, for it shows that Theorem~\ref{thm: rel} refines Theorem~\ref{thm: Hlap}. In fact, the gain in using the relative bound instead of the Hoffman-type bound can be highlighted by writing the bound of Theorem~\ref{thm: rel} as follows:
\begin{align}\label{eq: gain}
\alpha\leq n\Big(1-\frac{\delta}{\lambda_{\max}}\Big)-\frac{\Delta-\delta}{\lambda_{\max}-\delta}\bigg(n\Big(1-\frac{\delta}{\lambda_{\max}}\Big)-\alpha'\bigg)
\end{align}
So, for an irregular graph, the gain reflects the gap in the bound
\begin{align}\label{eq: prime}
\alpha'\leq n\Big(1-\frac{\delta}{\lambda_{\max}}\Big).
\end{align}
If the bound \eqref{eq: prime} is strict, then our relative bound is strictly better than the Hoffman-type bound. If equality holds in \eqref{eq: prime}, then the relative bound and the Hoffman-type bound become equalities as well. A simple example of a graph, with the property that equality is achieved in \eqref{eq: prime}, is given by a bi-regular bipartite graph. A more sophisticated example is the unitary polarity graph (\cite[p.299]{vDH}, \cite[Thm.7]{MW}).

\section{A brief reminder on the Laplacian} 
Let $X$ be a finite simple graph. The Laplacian is a linear operator on the space of complex-valued functions defined on the vertex set $V$ of $X$. This is a finite-dimensional space, endowed with the inner product 
\begin{align*}
\la \phi,\psi\ra=\sum_{v\in V} \phi(v)\overline{\psi(v)}.
\end{align*}
By definition, the Laplacian acts on a complex-valued function $\phi$ defined on $V$, as follows:
\begin{align*}
(L\phi)(v)=\deg(v)\: \phi(v)-\sum_{w:\: w\sim v} \phi(w)
\end{align*}
Here, $\deg(v)$ is the degree of the vertex $v$, and the sum is taken over all neighbours of $v$. The largest Laplacian eigenvalue $\lambda_{\max}$ satisfies
\begin{align}\label{eq: lapeigen}
\la L\phi, \phi\ra\leq \lambda_{\max} \la \phi, \phi\ra,
\end{align}
and the left-hand side of \eqref{eq: lapeigen} admits a very useful alternate formula:
\begin{align}\label{eq: lapuse}
\la L\phi, \phi\ra=\sum_{\{v,w\}\in E} |\phi(v)-\phi(w)|^2
\end{align}
where the sum is taken over all the edges of $X$. An absolute difference $|\phi(v)-\phi(w)|$ will be referred to as the edge differential of $\phi$ over the edge $\{v,w\}$.


\section{Discussion of the relative bound}
\subsection{Proof of Theorem~\ref{thm: rel}} Let $U$ be a non-empty independent set of vertices in $X$. Partition $U$ into two subsets, according to their vertex degree in $X$: let $U_1$ contain the vertices of $U$ having non-maximal degree, and let $U_2$ contain the vertices of maximal degree. Put $n_1=|U_1|$ and $n_2=|U_2|$, so $|U|=n_1+n_2$. Note that $n_1\leq \alpha'$, as $U_1$ is an independent set of vertices in the derived graph $X'$.  

Define a function on the vertices of $X$ as follows:
\begin{align*}
f=a_1\cdot \ct_{U_1}+a_2 \cdot \ct_{U_2}-c\cdot \ct=
\begin{cases}
a_1-c & \textrm{ on } U_1\\a_2-c & \textrm{ on } U_2\\-c & \textrm{ on } U^c
\end{cases}
\end{align*}
where $U^c$ denotes the complement of $U$. The real numbers $a_1$, $a_2$, and $c$ are subject to the requirement that $f$ be orthogonal to the constant function $\ct$. As
\begin{align*}
\la f, \ct\ra= a_1\la \ct_{U_1}, \ct\ra+a_2\la \ct_{U_2}, \ct\ra-c\la\ct, \ct\ra =n_1a_1+n_2a_2-nc,
\end{align*}
this means that $c$ is determined by the relation $nc=n_1a_1+n_2a_2$.

We bring in the largest Laplacian eigenvalue $\lambda:=\lambda_{\max}$ by means of the inequality \eqref{eq: lapeigen}, $\la Lf, f\ra\leq \lambda \la f, f\ra$. Firstly, we compute
\begin{align*}
\la f, f\ra&= \la f, a_1\cdot \ct_{U_1}\ra+\la f, a_2\cdot \ct_{U_2}\ra=a_1(a_1-c)n_1+a_2(a_2-c)n_2\\
&=n_1a_1^2+n_2a_2^2-(n_1a_1+n_2a_2)c=n_1a_1^2+n_2a_2^2-\frac{1}{n}(n_1a_1+n_2a_2)^2.
\end{align*}
Secondly, we estimate the term $\la Lf, f\ra$, and we do so by relying on the formula \eqref{eq: lapuse}. The independence of $U$ means that edges in $X$ either join $U$ to $U^c$, or they are internal to $U^c$; the latter ones have, however, vanishing edge differential. Edges joining $U$ to $U^c$ split as follows: there are $\Delta n_2$ edges between $U_2$ and $U^c$, each with an edge differential equal to $|a_2|$, and there are at least $\delta n_1$ edges between $U_1$ and $U^c$, each with an edge differential equal to $|a_1|$. Thus
\begin{align*}
\la Lf, f\ra\geq \delta n_1a_1^2+\Delta n_2a_2^2.
\end{align*}
Plugging in the above estimates into the inequality $\la Lf, f\ra\leq \lambda \la f, f\ra$, and rewriting, brings us to the following:
\begin{align*}
n_1\big((\lambda-\delta)n-\lambda n_1\big)a_1^2+n_2\big((\lambda -\Delta)n-\lambda n_2\big)a_2^2-2\lambda n_1n_2a_1a_2\geq 0 \tag{$*$}
\end{align*}
Viewing the left-hand side of $(*)$ as a quadratic form in $a_1$ and $a_2$, we infer that its discriminant is non-positive. Dividing through by $4n_1n_2$, this says that
\begin{align*}
\lambda^2 n_1n_2\leq \big((\lambda-\delta)n-\lambda n_1\big)\big((\lambda -\Delta)n-\lambda n_2\big)
\end{align*}
and so, after cancelling the term $\lambda^2 n_1n_2$ and dividing through by $n$, we get
\begin{align*}
\lambda(\lambda-\Delta)n_1+\lambda(\lambda-\delta)n_2\leq (\lambda-\Delta)(\lambda-\delta)n.\tag{$**$}
\end{align*}
On the way, we have assumed that $n_1$ and $n_2$ are non-zero. Note that, if $n_1=0$ or $n_2=0$, it is still true that $(*)$ implies $(**)$. 

Replacing $n_2=|U|-n_1$ on the left-hand side of  $(**)$, we can rewrite it as
\begin{align*}
\lambda(\lambda-\delta)|U|\leq (\lambda-\Delta)(\lambda-\delta)n+\lambda(\Delta-\delta)n_1.
\end{align*}
Finally, we bound $n_1\leq \alpha'$, we divide through by $\lambda(\lambda-\delta)$, and we finally get
\begin{align*}
|U|\leq n\Big(1-\frac{\Delta}{\lambda}\Big)+\alpha'\frac{\Delta-\delta}{\lambda-\delta},
\end{align*}
which completes the proof.

\subsection{A weaker bound, after Godsil and Newman} In order to illuminate the above proof and its outcome, let us consider the following argument. 

Let $U$ be a non-empty set of vertices in $X$, and let $g$ be the map on the vertices of $X$ defined by $g= n-|U|$ on $U$, and $g= -|U|$ on $U^c$. Then
\begin{align*}
\la g,g\ra=n|U|(n-|U|), \qquad \la Lg,g\ra=n^2 e(U,U^c)
\end{align*}
where $e(U,U^c)$ denotes the number of edges joining vertices in $U$ to vertices in $U^c$. If $U$ is an independent set, then
\begin{align*}
e(U,U^c)=\sum_{v\in U} \mathrm{deg}(v)= |U| \:\deg(U)
\end{align*}
where $\deg(U)=|U|^{-1}\sum_{v\in U}\mathrm{deg}(v)$ denotes the average degree over $U$. Plugging in these computations into the inequality $\la Lg, g\ra\leq \lambda \la g, g\ra$, where $\lambda:=\lambda_{\max}$, leads to
\begin{align}
|U|\leq n\Big(1-\frac{\deg(U)}{\lambda}\Big) \label{IH}.
\end{align}
The bound \eqref{IH} is due to Godsil and Newman \cite[Cor.3.5]{GN}. Obviously, it implies the Hoffman-type bound, as $\deg(U)\geq \delta$. 

Following \cite{GN}, we can derive from \eqref{IH} an explicit bound for the independence number of $X$. As before, partition $U$ into $n_1$ vertices of non-maximal degree, and $n_2$ vertices having maximal degree in $X$. Note that $n_1$ is at most $\alpha'$, the independence number of the derived graph $X'$. The average degree over $U$ can then be lower-bounded as follows:
\begin{align*}
\deg(U)\geq \frac{1}{|U|}(n_1\delta+n_2\Delta)= \Delta-n_1\:\frac{\Delta-\delta}{|U|}\geq \Delta-\alpha'\frac{\Delta-\delta}{|U|}.
\end{align*}
Combining this lower bound with \eqref{IH}, one gets a quadratic inequality for $|U|$, whence an explicit upper bound for $|U|$. The conclusion is that the independence number of $X$ satisfies
\begin{align}\label{GN-type}
\alpha \leq\frac{n}{2}\Bigg(1-\frac{\Delta}{\lambda}+\sqrt{\Big(1-\frac{\Delta}{\lambda}\Big)^2+\frac{4\alpha'(\Delta-\delta)}{n\lambda}}\Bigg).
\end{align}
The bound \eqref{GN-type} is weaker than the relative bound, but stronger than the Hoffman-type bound. This can be worked out directly, and both assertions reduce, after calculations, to the fact that \eqref{eq: prime} holds in an irregular graph. If \eqref{eq: prime} is strict, then the ordering of the three bounds is strict as well. If equality holds in \eqref{eq: prime}, then the three bounds agree.

Godsil and Newman \cite{GN} actually work out a particular instance of the bound \eqref{GN-type}, and \eqref{GN-type} can be viewed as a direct descendant of arguments from \cite{GN}. The formal novelty is the consideration of the derived graph $X'$. Our relative bound, on the other hand, arises from a new idea: that of optimally weighting the edge-count given by a splitting of an independent set along maximal/non-maximal degrees. 

\subsection{Monotony} The spectral bounding function which appears in Theorem~\ref{thm: Hlap}, namely
\begin{align*}
\lambda\mapsto n\Big(1-\frac{\delta}{\lambda}\Big),
\end{align*}
is obviously increasing. We record the following elementary lemma, which shows that the spectral bounding function which appears in Theorem~\ref{thm: rel} is also increasing in a relevant range. This increasing behaviour is very useful: often, we only know an upper bound for the largest Laplacian eigenvalue $\lambda_{\max}$, rather than an exact value. 

\begin{lem}\label{lem: monoton} The function
\begin{align*}
\lambda\mapsto n\Big(1-\frac{\Delta}{\lambda}\Big)+\alpha'\frac{\Delta-\delta}{\lambda-\delta}
\end{align*}
is increasing for $\lambda\geq \lambda_{\max}$.
\end{lem}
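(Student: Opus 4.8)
The plan is to show that the derivative of the stated bounding function is non-negative throughout the range $\lambda\geq\lambda_{\max}$. Write $g(\lambda)$ for the function. First I would dispose of the degenerate cases: if $\Delta=\delta$ (i.e.\ $X$ is regular) or $\alpha'=0$, the second summand vanishes identically and $g(\lambda)=n(1-\Delta/\lambda)$ is plainly increasing, so assume $\Delta>\delta$ and $\alpha'>0$. Differentiating gives
\[
g'(\lambda)=\frac{n\Delta}{\lambda^2}-\frac{\alpha'(\Delta-\delta)}{(\lambda-\delta)^2},
\]
so that, since $\lambda>\delta$ on the relevant range, $g'(\lambda)\geq 0$ is equivalent to
\[
\frac{n\Delta}{\alpha'(\Delta-\delta)}\geq\Big(\frac{\lambda}{\lambda-\delta}\Big)^2.
\]
The left-hand side is constant in $\lambda$, while the right-hand side is a decreasing function of $\lambda$ on $(\delta,\infty)$. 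Hence the inequality is hardest to satisfy at the smallest admissible value $\lambda=\lambda_{\max}$, and it suffices to verify it there.

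At $\lambda=\lambda_{\max}$ the target inequality reads $n\Delta(\lambda_{\max}-\delta)^2\geq\alpha'(\Delta-\delta)\lambda_{\max}^2$. Here I would invoke the estimate \eqref{eq: prime}, namely $\alpha'\leq n(1-\delta/\lambda_{\max})=n(\lambda_{\max}-\delta)/\lambda_{\max}$; recall that this follows from the Godsil--Newman bound \eqref{IH} applied to a maximum independent set of $X'$, all of whose vertices have non-maximal degree in $X$. Substituting this upper bound for $\alpha'$ and cancelling the positive factor $n(\lambda_{\max}-\delta)$, the inequality collapses to $\Delta(\lambda_{\max}-\delta)\geq(\Delta-\delta)\lambda_{\max}$, that is, to $\delta(\lambda_{\max}-\Delta)\geq 0$. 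Since $\delta\geq 0$ and the largest Laplacian eigenvalue satisfies the standard bound $\lambda_{\max}\geq\Delta+1>\Delta$, this last inequality holds, and the argument is complete.

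The step I expect to be the main obstacle is the bounding of $\alpha'$. The crude estimate $\alpha'\leq n$ is \emph{not} enough to push the argument through: substituting it at $\lambda=\lambda_{\max}$ and using only $\lambda_{\max}\geq\Delta+1$ reduces the target to $\delta\big(1-\Delta(\Delta-\delta)\big)\geq 0$, which need not hold once $\Delta(\Delta-\delta)>1$. The point is therefore to feed in the sharper, self-referential estimate \eqref{eq: prime} for the derived graph, after which the monotonicity of $\lambda\mapsto(\lambda/(\lambda-\delta))^2$ does the rest. Everything else, namely the differentiation and the reduction to the single endpoint $\lambda=\lambda_{\max}$, is routine.
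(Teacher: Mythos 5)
Your proof is correct and takes essentially the same route as the paper's: differentiate, reduce to the single endpoint $\lambda=\lambda_{\max}$ using the monotonicity of the $\lambda$-dependent side, and close the argument with the Hoffman-type bound \eqref{eq: prime} on $\alpha'$ together with $\lambda_{\max}\geq\Delta+1>\Delta$. The only cosmetic difference is that the paper keeps the endpoint verification as a product of the two inequalities $\alpha'/n\leq 1-\delta/\lambda_{\max}$ and $1-\delta/\Delta<1-\delta/\lambda_{\max}$, whereas you substitute the bound on $\alpha'$ and simplify down to $\delta(\lambda_{\max}-\Delta)\geq 0$.
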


\begin{proof} If $b(\lambda)$ denotes the given function, then the condition $b'(\lambda)>0$ can be written as
\begin{align*}
\frac{\alpha'}{n}\Big(1-\frac{\delta}{\Delta}\Big)<\Big(1-\frac{\delta}{\lambda}\Big)^2.
\end{align*} 
We need to check that the above inequality holds for $\lambda\geq \lambda_{\max}$. Indeed, we have
\begin{align*}
\frac{\alpha'}{n}\leq 1-\frac{\delta}{\lambda_{\max}}, \qquad 1-\frac{\delta}{\Delta}<1-\frac{\delta}{\lambda_{\max}},
\end{align*}
by the Hoffman-type bound, respectively thanks to the fact that $\lambda_{\max}\geq \Delta+1>\Delta$. It follows that
\begin{align*}
\frac{\alpha'}{n}\Big(1-\frac{\delta}{\Delta}\Big)< \Big(1-\frac{\delta}{\lambda_{\max}}\Big)^2\leq \Big(1-\frac{\delta}{\lambda}\Big)^2
\end{align*}
as desired.
\end{proof}
\section{Examples, part I}
In the first two examples, we test the relative bound on graphs whose independence number is actually known. We will see that it performs much better than the Hoffman-type bound. In fact, the relative bound turns out to be integrally sharp, in the sense that the integral part of the upper bound equals the independence number. 

The third example discusses the relative bound in the context of cartesian products.

\subsection{Path graphs} The path graph $P_n$ on $n$ vertices, where $n\geq 3$, has independence number $\alpha(P_n)=\lceil n/2\rceil$. Furthermore, note that $P_n$ has maximal degree $\Delta=2$, minimal degree $\delta=1$, and largest Laplacian eigenvalue $\lambda_{\max}=2+2\cos(\pi/n)=4-\Theta(n^{-2})<4$. 

The Hoffman-type bound gives
\begin{align*}
\alpha(P_n)\leq n\Big(1-\frac{1}{\lambda_{\max}}\Big)<\frac{3n}{4}.
\end{align*}
Now let us apply the relative bound. The derived graph of $P_n$ consists of two disconnected nodes, so $\alpha'=2$. Therefore
\begin{align*}
\alpha(P_n)\leq n\Big(1-\frac{2}{\lambda_{\max}}\Big)+\frac{2}{\lambda_{\max}-1}<\frac{n}{2}+\frac{2}{3},
\end{align*}
the latter inequality owing to monotony (Lemma~\ref{lem: monoton}). The integral part of the right-hand side is $\lceil n/2\rceil$, so the above bound is integrally sharp.

\subsection{Cones} Let $X$ be any non-empty graph on $n$ vertices, except for the complete graph $K_n$. Consider the cone $\widehat{X}$ over $X$, obtained by adding a brand new vertex and then joining it to every vertex of $X$. The cone $\widehat{X}$ has $n+1$ vertices, and independence number $\alpha(\widehat{X})=\alpha$, the independence number of $X$. Furthermore, $\widehat{X}$ has maximal degree $n$, minimal degree $\delta+1$, where $\delta$ is the minimal degree of $X$, and $\lambda_{\max}=n+1$. 

The Hoffman-type bound gives
\begin{align*}
\alpha(\widehat{X})\leq n-\delta.
\end{align*}
This reads, in effect, as the basic upper bound $\alpha\leq n-\delta$ for $X$, a bound which is often very weak.

In order to apply the relative bound, we start by noting that the derived graph of $\widehat{X}$ is the base graph $X$. The relative bound then gives
\begin{align*}
\alpha(\widehat{X})\leq 1+\alpha-\frac{\alpha}{n-\delta}
\end{align*}
which is integrally sharp, as the right-hand side has integral part $\alpha$.

By way of contrast, the bound \eqref{GN-type} gives
\begin{align*}
\alpha(\widehat{X})\leq \frac{1}{2}\big(1+\sqrt{1+4\alpha(n-\delta-1)}\big)
\end{align*}
which, just like the Hoffman-type bound, might be far from the correct value $\alpha(\widehat{X})=\alpha$.

\subsection{Cartesian products} Let $X$ and $Y$ be non-empty graphs. The size, maximal degree, minimal degree, and the largest laplacian eigenvalue of $X$ and $Y$ are denoted $n_X,\Delta_X,\delta_X,\lambda_X$, respectively $n_Y,\Delta_Y,\delta_Y,\lambda_Y$.

Consider the cartesian product graph $X\square Y$. Recall, this graph has vertex set $X\times Y$, and two vertices $(x_1,y_1)$ and $(x_2,y_2)$ are adjacent whenever $x_1=x_2$ and $y_1$ is adjacent to $y_2$ in $Y$, or $x_1$ is adjacent to $x_2$ in $X$ and $y_1=y_2$. The size, maximal degree, minimal degree, and the largest laplacian eigenvalue of $X\square Y$ are $n_Xn_Y,\Delta_X+\Delta_Y,\delta_X+\delta_Y,\lambda_X+\lambda_Y$.

The basic upper bound for the independence number of $X\square Y$ is
\begin{align}\label{eq: viz}
\alpha(X\square Y)\leq \min\big\{\alpha(X)\: n_Y, \alpha(Y)\: n_X\big\}.
\end{align}
The Hoffman-type bound gives
\begin{align}
\alpha(X\square Y)&\leq n_Xn_Y\min\bigg\{1-\frac{\delta_X}{\lambda_X}, 1-\frac{\delta_Y}{\lambda_Y}  \bigg\}\label{eq: hofone}\\
&\leq n_Xn_Y\bigg(1-\frac{\delta_X+\delta_Y}{\lambda_X+\lambda_Y}\bigg)\label{eq: hoftwo}.
\end{align}
The bound \eqref{eq: hofone} combines \eqref{eq: viz} and the Hoffman-type bound for each factor, whereas \eqref{eq: hoftwo} is a direct application of the Hoffman-type bound for the product. Clearly, \eqref{eq: hofone} is a better bound than \eqref{eq: hoftwo}.

The relative bound says that the independence number of $X\square Y$ satisfies
\begin{align}\label{eq: relprod}
\alpha(X\square Y)\leq n_Xn_Y\Big(1-\frac{\Delta_X+\Delta_Y}{\lambda_X+\lambda_Y}\Big)+\alpha((X\square Y)')\frac{(\Delta_X+\Delta_Y)-(\delta_X+\delta_Y)}{(\lambda_X+\lambda_Y)-(\delta_X+\delta_Y)}.
\end{align}
The vertices of maximal degree in $X\square Y$ are pairs of vertices of maximal degree in $X$, respectively $Y$. So the derived graph $(X\square Y)'$ is the union of the two subgraphs $X'\square Y$ and $X\square Y'$. It follows that the independence number of $(X\square Y)'$ obeys the Leibniz-like rule
\begin{align*}
\alpha((X\square Y)')\leq \alpha(X'\square Y)+\alpha(X\square Y').
\end{align*}
Let us give a concrete example, in which the relative bound \eqref{eq: relprod} beats the basic bound \eqref{eq: viz}. We consider a cartesian product of the form $X\square P_{2k}$, where $X$ is a complete split graph on $n$ vertices, and $P_{2k}$ is the path graph on $2k$ vertices. Specifically, the graph $X$ is the join $K_{(1-\e)n}\vee (\e n)K_1$ of the complete graph on $(1-\e)n$ nodes with the empty graph on $\e n$ nodes. We assume that $k>1$ and $\e n>1$. 

As $P_{2k}$ has independence number $\alpha(P_{2k})=k$, and $X$ has independence number $\alpha(X)=\e n$, the basic bound \eqref{eq: viz} becomes 
\begin{align}\label{eq: bd1}
\alpha(X\square P_{2k})\leq \min\{2\e kn, kn\}.
\end{align}
On the other hand, the relative bound \eqref{eq: relprod} leads to 
\begin{align}\label{eq: bd2}
\alpha(X\square P_{2k})\leq \e k n+2\e n+3k.
\end{align}
We forgo the tedious details, but we highlight the key points. Firstly, the graph $X$ has maximal degree $\Delta=n-1$, minimal degree $\delta=(1-\e)n$, and largest laplacian eigenvalue $\lambda_{\max}=n$. Secondly, the largest Laplacian eigenvalue of $P_{2k}$ is less than $4$, and one proceeds by using Lemma~\ref{lem: monoton}. Thirdly, the derived graph of $X$ consists of $\e n$ independent vertices. So the independence number of the derived graph of $X\square P_{2k}$ can be bounded as follows:
\begin{align*}
\alpha((X\square P_{2k})')\leq \alpha(X'\square P_{2k})+\alpha(X\square P_{2k}')=k(\e n)+2(\e n).
\end{align*}
Now, let us think of $n$ and $k$ as being large, and $\e$ as being fixed in $(0,\tfrac{1}{2})$. Then \eqref{eq: bd2} improves \eqref{eq: bd1}, essentially by a factor of $2$. In fact, \eqref{eq: bd2} gives the correct order of magnitude: the Vizing lower bound
\begin{align*}
\alpha(X\square Y)\geq \alpha(X)\alpha(Y)+\min\big\{n_X-\alpha(X),n_Y- \alpha(Y)\big\}
\end{align*}
implies that $\alpha(X\square P_{2k})\geq \e kn+\min\{k,(1-\e)n\}$.

\section{Examples, part II}
The graphs considered in this section will be \emph{nearly regular}. A graph $X$ is said to be nearly $d$-regular if the vertex degrees are $d$ or $d-1$. The vertices of degree $d-1$ are thought of as being deficient, and by the \emph{deficiency} of $X$ we mean the number of deficient vertices. The derived graph $X'$ is the subgraph induced by the deficient vertices. 

Note that, for nearly regular graphs, we expect the gain in using the relative bound over the Hoffman-type bound to be relatively small.

\subsection{The Erd\H{o}s - R\'enyi graph} Let $\F$ be a finite field with $q$ elements. The \emph{Erd\H{o}s - R\'enyi graph} over $\F$ has the projective plane $(\F^3)^*/\F^*$ as its vertex set, and two distinct vertices $[x_1,x_2,x_3]$ and $[y_1,y_2,y_3]$ are joined by an edge whenever $x_1y_1+x_2y_2+x_3y_3=0$. This graph, denoted $ER_q$ in what follows, has $q^2+q+1$ vertices, it is nearly $(q+1)$-regular, and it has deficiency $q+1$. Furthermore, the deficient vertices are independent.

The Erd\H{o}s - R\'enyi graph was introduced in \cite{ER} and, independently, in \cite{Br}, as a $C_4$-free graph with many edges. It is an early, and distinguished, example in Tur\'an-type extremal graph theory. Recently, the independence number of the Erd\H{o}s - R\'enyi graph has been the subject of some attention. Let us give a quick overview. 

The non-trivial Laplacian eigenvalues of the Erd\H{o}s - R\'enyi graph are $q+1\pm \sqrt{q}$, and so $\lambda_{\max}=q+1+ \sqrt{q}$. The Hoffman-type bound gives 
\begin{align*}
\alpha(ER_q)\leq q\sqrt{q}+1.
\end{align*} 
Godsil and Newman \cite{GN}, using an instance of the bound \eqref{GN-type}, showed that
\begin{align}\label{eq: ER}
\alpha(ER_q)<q\sqrt{q}-q+2\sqrt{q}.
\end{align}

Refinements of \eqref{eq: ER} were pursued in \cite{KN+} and \cite{HW}. In \cite{HW}, it is shown that, in the case when $q$ is even, $\alpha(ER_q)<q\sqrt{q}-q+\sqrt{q}+2$.

Lower bounds for the independence number of the Erd\H{o}s - R\'enyi graph are given in \cite{MW}, with some recent improvements in \cite{MPS}. The overall feature is that $\alpha(ER_q)\geq Cq\sqrt{q}$ for some explicit numerical constant $C>0$. Notably, if $q$ is an even power of $2$, then $\alpha(ER_q)\geq q\sqrt{q}-q+\sqrt{q}$ \cite[Thm.6]{MW}.

Now, we can apply our relative bound, using $\alpha'=q+1$. The outcome is strictly better than \eqref{eq: ER}, but only marginally so. A more interesting application of the relative bound comes up in the following generalization.

\subsection{Orthogonality graphs} The Erd\H{o}s - R\'enyi graph is just the first in a family of graphs defined by orthogonality. Again, let $\F$ be a finite field with $q$ elements. We consider the usual inner product on $\F^n$, $n\geq 3$, given by $x\cdot y=x_1y_1+\dots+x_ny_n$. The \emph{orthogonality graph}, denoted $O^n_q$, has the projective plane $(\F^n)^*/\F^*$ as its vertex set, and two distinct vertices $[x]$ and $[y]$ are joined by an edge whenever $x\cdot y=0$. So the independence number of $O^n_q$ is interpreted, geometrically, as the largest number of lines through the origin in $\F^n$, no two of which are orthogonal. 

In what follows, it will be convenient to use the notation $[k]_q=\frac{q^k-1}{q-1}$.

The orthogonality graph $O^n_q$ has $[n]_q$ vertices, and it is nearly $[n-1]_q$-regular. By \cite[Sec.5]{vDH}, the non-trivial Laplacian eigenvalues of $O^n_q$ are
\begin{align*}
[n-1]_q\pm \sqrt{[n-1]_q-[n-2]_q}=[n-1]_q\pm q^{n/2-1}
\end{align*}
and so $\lambda_{\max}=[n-1]_q+q^{n/2-1}$. The Hoffman-type bound gives, after a pleasant computation, the following estimate on the independence number of the orthogonality graph:
\begin{align*}
\alpha(O^n_q)\leq q^{n/2}+1.
\end{align*}
Now let us work out the relative bound. Having computed the Hoffman-type bound, it is convenient to resort to \eqref{eq: gain}; we get 
\begin{align}\label{eq: lazy}
\alpha(O^n_q)\leq q^{n/2}+1-\frac{q^{n/2}+1-\alpha'}{q^{n/2-1}+1}=q^{n/2}+1-q+\frac{\alpha'+q-1}{q^{n/2-1}+1}.
\end{align}
We need to handle the main technical ingredient: the independence number, $\alpha'$, of the derived graph of $O^n_q$. It turns out that the derived graph is highly symmetric, namely a strongly regular graph, in most cases. This has been thoroughly studied by Parsons, and we will use several salient results from \cite{P}.

\subsubsection{Odd dimension} Assume $n>4$ is odd. Then the derived graph of $O^n_q$ is a connected strongly regular graph with parameters 
\begin{align*}
([n-1]_q, [n-2]_q-1, [n-3]_q-2, [n-3]_q).
\end{align*}
See \cite[Thm.2(vi)]{P} for odd $q$, and \cite[Thm.5.A(i); Thm.4]{P} for even $q$. The Hoffman bound gives, after a less pleasant computation, the estimate
\begin{align*}
\alpha'\leq q^{(n-1)/2}+1.
\end{align*}
Using \eqref{eq: lazy}, we deduce that
\begin{align*}
\alpha(O^n_q)\leq q^{n/2}-q+\sqrt{q}+2.
\end{align*}

\subsubsection{Even dimension} Assume $n>4$ is even, and $q$ is odd. Then the derived graph of $O^n_q$ is a connected strongly regular graph with parameters 
\begin{align*}
([n-1]_q+\e q^{n/2-1}, [n-2]_q-1+\e q^{n/2-1}, [n-3]_q-2+\e q^{n/2-1}, [n-3]_q+\e q^{n/2-2})
\end{align*}
where $\e$ is a signing defined by $\e=\sigma(-1)^{n/2}$, $\sigma$ being the quadratic character on $\F$. See \cite[Thm.3(i); (3) in Sec.7]{P}. There is a good structural description in the case when $q$ is even, as well \cite[Thm.5.B(i); (6) in Sec.7]{P}: the derived graph of $O^n_q$ is a cone over a regular graph that is not too far from being strongly regular. For the sake of simplicity, we disregard this case. 

The Hoffman bound gives, after a tedious computation, the following estimate:
\begin{align*}
\alpha'\leq 
\begin{cases} q^{n/2-1}+1 & \textrm{ if } \e=1\\
 q^{n/2}+1 & \textrm{ if } \e=-1
 \end{cases}
\end{align*}
If $\e=-1$, this analysis brings no improvement to the Hoffman-type bound. The satisfactory case is $\e=1$; this happens if and only if $n\equiv 0$ mod $4$, or $q\equiv 1$ mod $4$. Using \eqref{eq: lazy}, we deduce that
\begin{align*}
\alpha(O^n_q)\leq q^{n/2}-q+2.
\end{align*}


\end{document}